\newtheorem{theorem}{Theorem}
\newtheorem{proposition}{Proposition}
\newtheorem{lemma}{Lemma}
\newenvironment{proof}[1][Proof]{\begin{trivlist}
		\item[\hskip \labelsep {\bfseries #1}]}{\end{trivlist}}
\newenvironment{definition}[1][Definition]{\begin{trivlist}
		\item[\hskip \labelsep {\bfseries #1}]}{\end{trivlist}}
\newenvironment{remark}[1][Remark]{\begin{trivlist}
		\item[\hskip \labelsep {\bfseries #1}]}{\end{trivlist}}
\title{Generalized Dirichlet series of n variables associated with automatic sequences}
\author{Shuo LI\\

\normalsize{shuo.li@imj-prg.fr}\\}
\date {}
\begin{document}	
\maketitle

\section{Introduction}

The propose of this article is to give a sufficient condition for the meromorphic continuation of Dirichlet series of form $\sum_{\ul{x}\in \mathbf{N}^n_+} \frac{a_{\ul{x}}\prod_{i=1}^nx_i^{\mu_i}}{P(\ul{x})^s}$, where $(a_{\ul{x}})_{\ul{x}\in \mathbf{N}^n_+}$ is a $q$-automatic sequence of $n$ parameters, $\mu_i \in \mathbf{Z}_+$ and $P: \mathbf{N}^n \to \mathbf{R}$ a polynomial, such that $P$ does not have zeros in $\mathbf{Q}^{n}_{+}$. Some specific cases for $n=1$ are studied in this article as examples to show the possibility to have an holomorphic continuity on the whole complex plane. Some equivalences between infinite products are also built as consequences of these results.\\

Remarking that constant sequences are a kind of particular automatic sequences,  the Dirichlet sequences in the form $\sum_{(x_1, x_2...x_n) \in \mathbf{N}^I_{+}} \frac{x_1^{\mu_1}x_2^{\mu_2}...x_n^{\mu_n}}{p(x_1,x_2,..x_n)^s}$ have been widely studied, where $(\mu_1, \mu_2,... \mu_n) \in \mathbf{N}^n_{+}$ and $p$ is a $n$-variable function. R.H. Mellin \cite{Mellin} firstly proved in 1900 that the functions above have a meromorphic continuation to the whole complex plane when $\mu_i=0$ for all indexes $i$, then K. Mahler \cite{Mah} generalized the result to the case that $\mu_i$ are arbitrary positive integers when the polynomial satisfies the elliptic condition in 1927. In 1987, P. Sargos \cite{sargos} proved that the condition ``$\lim |p(x_1, x_2...x_n)| \to \infty$ when $|(x_1, x_2...x_n)| \to \infty$ and $p$ is non-degenerate'' is sufficient for these Dirichlet sequences to 
have a meromorphic continuation. In 1997, D. Essouabri \cite{ess} generalized the condition to the case, $\frac{\partial_{\ul{\mu}}p(\ul{x})}{p(\ul{x})} =\mathcal{O}(1), \forall \ul{\mu} \in \mathbf{Z}_+^n$.The Dirichlet sequences of the form $\sum_{n=1}^{\infty} \frac{a_n}{n^s}$ have been studied in \cite{ALLOUCHE2000}, and our work is a natural generalization of the results in the above article by using the same method of calculations.

\section {Notation, definitions and basic properties of automatic sequences}

Here we define some notation used in this article. We let $\ul{x}$ denote an $n$-tuple ${(x_1, x_2 ... x_n)}$. We say $\ul{x} \geq \ul{y}$ (resp. $\ul{x} > \ul{y}$) if and only if $\ul{x}-\ul{y} \in \mathbf{R}_{+}^n$(resp. $\ul{x}-\ul{y} \in \mathbf{R}_+^n$), and we have an analogue definition for the symbol $\leq$ (resp. $<$). We let $\ul{x}^{\ul{\mu}}$ denote the $n$-tuple ${(x_1^{\mu_1}, x_2^{\mu_2} ... x_n^{\mu_n})}$. For a constant $c$, we let $\ul{c}$ denote the tuple $(c,c...c)$ and for two tuples $\ul{x}$ and $\ul{y}$, we let $<\ul{x},\ul{y}>$ denote the real number $\sum_{i=1}^{n}x_iy_i$. For an $n$-tuple ${(x_1, x_2 ... x_n)}$, we let $||\; ||_d$ denote the norm $d$ and let $||\;||$ or $||\;||_2$ denote the norm 2.

\begin{definition}
Let $q \geq 2$ be an integer. A sequence $(a_{\ul{x}})_{\ul{x} \geq \ul{0}}$ with values
in the set $\mathcal{A}$ is called $q$-automatic if and only if its $q$-kernel $\mathcal{N}_q ((a_{\ul{x}})_{\ul{x} \geq \ul{0}})$ is finite,
where the $q$-kernel of the sequence $(a_{\ul{x}})_{\ul{x} \geq \ul{0}}$ is the set of subsequences
defined by
$$\mathcal{N}_q ((a_{\ul{x}})_{\ul{x} \geq \ul{0}})=\left\{(m_1, m_2 ... m_n)\longmapsto a_{(q^km_1+l_1,q^km_2+l_2, ..., q^km_n+l_n)}; k\geq 0, \ul{(0)}\leq \ul{l} \leq \ul{(q^k-1)}\right\}.$$
\end{definition} 

\begin{remark}
	A $q$-automatic sequence necessarily takes finitely many
	values. Hence
	we can assume that the set $\mathcal{A}$ is finite.
\end{remark}

Because of the definition of $q$-automatic with $n$ variables, there are some basic properties.

\begin{theorem}
	Let $q\geq 2$ be an integer and  $(a_{\ul{x}})_{\ul{x}\geq \ul{0}}$ be a sequence with values
	in $\mathcal{A}$. Then, the following properties are equivalent:
	
	(i) The sequence  $(a_{\ul{x}})_{\ul{x}\geq \ul{0}}$ is $q$-automatic
	
	(ii) There exists an integer $t \geq 1$ and a set of $t$ sequences $\mathcal{N}^{'}=\left\{(a_{\ul{x}}^1)_{\ul{x}\geq \ul{0}},...,(a_{\ul{x}}^t)_{\ul{x}\geq \ul{0}}\right\})$ such that
	
	- the sequence $(a_{\ul{x}}^1)_{\ul{x}\geq \ul{0}}$ is equal to the sequence $(a_{\ul{x}})_{\ul{x}\geq \ul{0}}$
	
	- the set $\mathcal{N}^{'}$ is closed under the maps $ (a_{\ul{x}})_{\ul{x}\geq \ul{0}} \longmapsto (a_{q\ul{x}+\ul{y}})_{\ul{x}\geq \ul{0}}$ for $\ul{0} \leq \ul{y} \leq \ul{q-1}$
	
	(iii) There exist an integer $t\geq 1$ and a sequence $(A_{\ul{x}})_{\ul{x}\geq \ul{0}}$ with values in $\mathcal{A}^t$, that we denote as a column vector, as $(A_{1,1...1}, A_{2,1...1}, A_{1,2...1}...A_{1,1...2}, A_{2,2...1}...)^t$. There exist $q^n$ matrices of size $t \times t$, say $M_{1,1...1}, M_{1,2...1}...M_{q,q..q}$ , with the property that each row of each $M_i$ has exactly one entry equal to $1$, and the other $t-1$ entries equal to $0$, such that:
	
	- the first component of the vector  $(A_{\ul{x}})_{\ul{x}\geq \ul{0}}$ is the sequence  $(a_{\ul{x}})_{\ul{x}\geq \ul{0}}$
	
	- for each $\ul{y}$ such that $\ul{0} \leq \ul{y} \leq \ul{q-1}$, the equality $A_{q\ul{x}+\ul{y}}=M_{\ul{y}}A_{\ul{x}}$ holds.
\end{theorem}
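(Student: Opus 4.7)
The plan is to establish the cycle $(i) \Rightarrow (ii) \Rightarrow (iii) \Rightarrow (i)$, mirroring the classical one-variable Cobham argument but keeping track of multi-indices throughout.

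For $(i) \Rightarrow (ii)$, the natural choice is to take $\mathcal{N}' = \mathcal{N}_q(\ul{a})$ itself. The pair $k = 0$, $\ul{l} = \ul{0}$ shows that $(a_{\ul{x}})$ lies in the kernel, and to verify closure under $(b_{\ul{x}}) \mapsto (b_{q\ul{x}+\ul{y}})$ I would observe that if $b_{\ul{x}} = a_{q^k \ul{x} + \ul{l}}$ with $\ul{0} \leq \ul{l} \leq \ul{q^k - 1}$, then $b_{q\ul{x}+\ul{y}} = a_{q^{k+1}\ul{x}+(q^k\ul{y}+\ul{l})}$, and each coordinate of $q^k\ul{y}+\ul{l}$ sits in $[0,\,q^{k+1}-1]$ since $q^k y_i + l_i \leq q^k(q-1) + (q^k-1) = q^{k+1}-1$.

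For $(ii) \Rightarrow (iii)$, I would enumerate $\mathcal{N}' = \{(a_{\ul{x}}^1), \dots, (a_{\ul{x}}^t)\}$ with $a^1 = a$, stack them into the column vector $A_{\ul{x}} = (a_{\ul{x}}^1, \dots, a_{\ul{x}}^t)^T$, and use closure to extract, for each pair $(i, \ul{y})$, a unique index $\sigma_{\ul{y}}(i) \in \{1,\dots,t\}$ such that $(a_{q\ul{x}+\ul{y}}^i)_{\ul{x}} = (a_{\ul{x}}^{\sigma_{\ul{y}}(i)})_{\ul{x}}$. Setting $(M_{\ul{y}})_{i,\,\sigma_{\ul{y}}(i)} = 1$ and the remaining entries to zero yields matrices of the prescribed shape, and the relation $A_{q\ul{x}+\ul{y}} = M_{\ul{y}} A_{\ul{x}}$ holds by construction, with the first component of $A_{\ul{x}}$ being $a_{\ul{x}}$ by choice of enumeration.

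For $(iii) \Rightarrow (i)$, I would iterate the recursion. Every $\ul{l}$ with $\ul{0}\leq \ul{l}\leq \ul{q^k-1}$ admits a coordinate-wise base-$q$ expansion $\ul{l} = \ul{l}_0 + q\ul{l}_1 + \cdots + q^{k-1}\ul{l}_{k-1}$ with $\ul{0} \leq \ul{l}_j \leq \ul{q-1}$, and an induction on $k$ gives $A_{q^k\ul{x}+\ul{l}} = M_{\ul{l}_0} M_{\ul{l}_1} \cdots M_{\ul{l}_{k-1}} A_{\ul{x}}$. The class of $t\times t$ matrices whose rows each contain exactly one $1$ is closed under multiplication and has cardinality at most $t^t$, so only finitely many first rows, and hence finitely many distinct subsequences $(a_{q^k\ul{x}+\ul{l}})_{\ul{x}}$, can arise; this forces $\mathcal{N}_q(\ul{a})$ to be finite. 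No step presents a conceptual obstacle; the only item that needs care is the coordinate-wise treatment of the base-$q$ expansion of $\ul{l}$ and the verification that the products of the $M_{\ul{y}}$ stay within the finite class of row-unit $0/1$ matrices.
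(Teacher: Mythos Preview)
Your proposal is correct and is exactly the standard Cobham-type argument that the paper has in mind: the paper's own proof is a single sentence invoking the finiteness of $\mathcal{N}'$ and citing the one-index case in Allouche--Mend\`es~France--Peyri\`ere, so you have simply written out the details that the paper omits. One small remark on $(iii)\Rightarrow(i)$: since each product $M_{\ul{l}_0}\cdots M_{\ul{l}_{k-1}}$ is again a row-unit $0/1$ matrix, its first row selects a single component $a^{j}$ of $A_{\ul{x}}$, so the $q$-kernel in fact has at most $t$ elements (your $t^{t}$ bound on the number of such matrices is correct but not needed for the conclusion).
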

	
\begin{proof}
	It is a natural consequence of the finiteness of the set $\mathcal{N}^{'}$, see for example  \cite{salon}.
\end{proof}

\begin{proposition}
	Let $(a_{\ul{x}})_{\ul{x}\geq \ul{0}}$ be a $q$-automatic sequence and $(b_{\ul{x}})_{\ul{x}\geq \ul{0}}$ be a periodic sequence of period $\ul{c}$. Then the sequence $(a_{\ul{x}} \times b_{\ul{x}})_{\ul{x}\geq \ul{0}}$ is also $q$-automatic and its $q$-kernel can be completed in such a way that all transition matrices of the maps $(a_{\ul{x}} \times b_{\ul{x}})_{\ul{x}\geq \ul{0}} \longmapsto (a_{q\ul{x}+\ul{y}} \times b_{q\ul{x}+\ul{y}})_{\ul{x}\geq \ul{0}}$ on the new set are independent on the choice of the values taken by the sequence $(b_{\ul{x}})_{\ul{x}\geq \ul{0}}$.
\end{proposition}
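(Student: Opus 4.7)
My plan is to build a $q$-automaton for $a\times b$ by combining the automaton for $a$ provided by Theorem 1 with an automaton for $b$ whose states are labelled intrinsically, making no reference to the particular values of $b$. The product automaton then inherits both features.

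First I would check that $b$ is itself $q$-automatic. Since $b$ has period $\ul{c}=(c,\ldots,c)$, every subsequence $\ul{x}\mapsto b_{q^k\ul{x}+\ul{l}}$ is again $\ul{c}$-periodic, because $b_{q^k(\ul{x}+\ul{c})+\ul{l}}=b_{q^k\ul{x}+\ul{l}+q^k\ul{c}}=b_{q^k\ul{x}+\ul{l}}$. As there are only finitely many sequences $(\mathbf{Z}/c\mathbf{Z})^n\to\mathcal{B}$, the kernel $\mathcal{N}_q(b)$ is finite.

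To force the transition data to be independent of $b$'s values, I would enlarge this kernel to the family
\[
\mathcal{M}=\bigl\{f_{\ul{\alpha},\ul{\beta}}\;:\;\ul{\alpha},\ul{\beta}\in(\mathbf{Z}/c\mathbf{Z})^n\bigr\},\qquad f_{\ul{\alpha},\ul{\beta}}(\ul{x})=b_{\ul{\alpha}\,\ul{x}+\ul{\beta}},
\]
where $\ul{\alpha}\,\ul{x}$ denotes the componentwise product. The computation $b_{\ul{\alpha}(q\ul{x}+\ul{y})+\ul{\beta}}=b_{(q\ul{\alpha})\ul{x}+(\ul{\alpha}\ul{y}+\ul{\beta})}$ shows that $\ul{x}\mapsto q\ul{x}+\ul{y}$ acts on $\mathcal{M}$ by the purely arithmetic rule $(\ul{\alpha},\ul{\beta})\mapsto(q\ul{\alpha}\bmod\ul{c},(\ul{\alpha}\ul{y}+\ul{\beta})\bmod\ul{c})$, which does not involve the values of $b$. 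The set $\mathcal{M}$ contains $b$ itself (take $\ul{\alpha}=\ul{1}$, $\ul{\beta}=\ul{0}$), is closed under these maps, and is finite, hence witnesses the automaticity of $b$ through transition matrices intrinsic to the period.

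Finally, with $\mathcal{N}'=\{a^1,\ldots,a^t\}$ the completed kernel of $a$ from Theorem 1(ii), I would take
\[
\mathcal{N}''=\bigl\{a^i\times f_{\ul{\alpha},\ul{\beta}}\;:\;1\le i\le t,\;(\ul{\alpha},\ul{\beta})\in(\mathbf{Z}/c\mathbf{Z})^{2n}\bigr\}.
\]
This finite set contains $a\times b$ (for $i=1$, $\ul{\alpha}=\ul{1}$, $\ul{\beta}=\ul{0}$), is closed under $\ul{x}\mapsto q\ul{x}+\ul{y}$ because both $\mathcal{N}'$ and $\mathcal{M}$ are, and its transition matrices are fully determined by those of $\mathcal{N}'$ (depending only on $a$) and those of $\mathcal{M}$ (depending only on $q$, $c$ and $\ul{y}$); hence they are insensitive to the values of $b$. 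The only mildly delicate step is replacing the minimal $q$-kernel of $a\times b$ by the possibly larger family $\mathcal{N}''$, but this is legitimated by condition (ii) of Theorem 1, which requires only \emph{some} finite closed set containing the sequence, not the minimal one.
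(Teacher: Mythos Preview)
Your proof is correct and follows essentially the same strategy as the paper: replace the value-dependent $q$-kernel of $b$ by a larger family indexed by residue data modulo $c$ (the paper uses the $q$-kernel of the canonical sequence $b'(\ul{m})=\ul{m}\bmod c$, you use all pairs $(\ul{\alpha},\ul{\beta})\in(\mathbf{Z}/c\mathbf{Z})^{2n}$), then take the product with the completed kernel of $a$ so that the transition matrices act on an index set that carries no information about the values of $b$. Your indexing set is a bit larger than the paper's, but the underlying idea---tensoring the $a$-automaton with a $b$-independent automaton built from residues mod $c$---is the same.
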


\begin{proof}
	As $(a_{\ul{x}})_{\ul{x}\geq \ul{0}}$ is a $q$-automatic sequence, we let\\$\mathcal{N}_a : \left\{(a_{\ul{x}}^{(1)})_{\ul{x}\geq \ul{0}},(a_{\ul{x}}^{(2)})_{\ul{x}\geq \ul{0}}, ..., (a_{\ul{x}}^{(l)})_{\ul{x}\geq \ul{0}}\right\}$ denote its $q$-kernel. The sequence $(b_{\ul{x}})_{\ul{x}\geq \ul{0}}$ is a periodic sequence, thus it is also an $q$-automatic sequence, we let $\mathcal{N}_b : \left\{(b_{\ul{x}}^{(1)})_{\ul{x}\geq \ul{0}},(b_{\ul{x}}^{(2)})_{\ul{x}\geq \ul{0}}, ..., (b_{\ul{x}}^{(s)})_{\ul{x}\geq \ul{0}}\right\}$ denote the $q$-kernel of $(b_{\ul{x}})_{\ul{x}\geq \ul{0}}$.  As both of the $q$-kernel are finite, we can conclude that the set of Cartesian product of these two above sets is finite: $$\mathcal{N}_{ab} : \left\{(a_{\ul{x}}^{(i)} \times b_{\ul{x}}^{(j)})_{\ul{x}\geq \ul{0}}| 0 \leq i \leq l, 0 \leq j \leq s \right\}$$ which is the $q$-kernel of the sequence $(a_{\ul{x}} \times b_{\ul{x}})_{\ul{x}\geq \ul{0}}$. \\

To the completion, we remark that there is a onto map from $\mathcal{N}_b'$ to $\mathcal{N}_b$, where $\mathcal{N}_b'$ is the $q$-kernel of the periodic sequence $(I_{\ul{x}})_{\ul{x}\geq \ul{0}}$ defined by $$I_{(m_1,m_2,...,m_n)}=(y_1,y_2,...,y_n) \ \text{where}\ y_i\equiv m_i \mod c, 1\leq i \leq n $$
	and the map is defined as $$\mathcal{N}_b' \to \mathcal{N}_b : (I_{\ul{x}})_{\ul{x} \geq \ul{0}} \to (b_{I_{\ul{x}}})_{\ul{x} \geq \ul{0}}.$$ 
So it is enough to work on the finite set $\mathcal{N}_a \times \mathcal{N}'_b$.
\end{proof}

Let us consider the Dirichlet series $f(s)=\sum_{\ul{x}\in \mathbf{N}^n/\ul{0}} \frac{a_{\ul{x}}}{p(\ul{x})^s}$, Where $a_{\ul{x}}$ is $q$-automatic, a necessary condition of the convergence of such series is that $|p(\ul{x})| \to \infty$ when $||\ul{x}|| \to \infty$, here we want to find a sufficient condition.

An achievable assumption for $f(s)$ to be meromorphic is that the polynomial $p$ is elliptic, which means that, if the degree of $p$ is $d$ then the homogeneous polynomial $p_d(\ul{x})$ of $p(\ul{x})$ satisfies the condition 

$$p_d(\ul{x})>0, \; \forall \ul{x} \in [0, \infty[^n \backslash \left\{(0,0,...0)\right\}.$$

Before announcing the main theorem, we would like to study some properties of elliptic polynomials:

\begin{lemma}
Let $(r_1,r_2,..,r_n)$ be a vector on $\mathbf{Z}^n_{+}$ such that $\sum_{i=1}^nr_i <d$ and $\ul{x} \in \mathbf{R}_+^n$ then:

$$\frac{x_1^{r_1}x_2^{r_2}...x_n^{r_n}}{\sum_{i=1}^nx_i^d} =\mathcal{O} (||x||^{\sum_{i=1}^{n}r_i-d}), \; \text{when} \; ||x|| \to \infty.$$

\end{lemma} 

\begin{proof}
It is enough to see the following inequality:

$$\sum_{i=1}^nx_i^d=\sum_{i=1}^nr_i\frac{1}{r_i}x_i^d =\sum_{i=1}^n\sum_{j=1}^{r_i}\frac{1}{r_i}x_i^d \geq (\prod_{i=1}^{n}\frac{1}{r_i^{r_i}}x_i^{dr_i})^{\frac{1}{\sum_{i=1}^nr_i}},$$
which is from the inequality of arithmetic and geometric means. With the assumption $\sum_{i=1}^nr_i <d$ and the equivalences between norms, we conclude $\frac{x_1^{r_1}x_2^{r_2}...x_n^{r_n}}{\sum_{i=1}^n x_i^d} \leq \prod_{i=1}^{n}r_i^{r_i}||x||_d^{\sum_{i=1}^{n}r_i-d}=\mathcal{O} (||x||^{\sum_{i=1}^{n}r_i-d}), \; \text{when} \; ||x|| \to \infty$.
\end{proof}

\begin{lemma}
If a polynomial $p$ is elliptic of degree $d$ and $p_d$ is the homogeneous polynomial of degree $d$ of $p$, then:

 (i) all coefficients of terms $x_1^d,x_2^d,...,x_n^d$ are positive;

(ii) $p_d(\ul{x}) \to\infty \; \text{when} \; ||x||\to \infty$;

(iii) there exists a positive number $\alpha$ such that $\forall \ul{x} \in [0, \infty[^n \backslash \left\{(0,0,...0)\right\}$, $p_d'(\ul{x})=p_d(\ul{x})-\alpha\sum_{i=1}^{n}x_i^d>0$;

(iv) $p(\ul{x}) \to\infty \; \text{when} \; ||x||\to \infty$;

(v) there exists a positive number $\alpha$ such that $p'(\ul{x})=p(\ul{x})-\alpha\sum_{i=1}^{n}x_i^d \to \infty$ when $||x|| \to \infty$.

\end{lemma} 

\begin{proof}

Assertion (i) is straightforward by evaluating the function at\\
 $(1,0,...,0),(0,1,...,0),...,(0,0,...,1)$.\\
 For (ii),  let us consider the set 
$$\left\{p_d(\ul{x})|\forall \ul{x} \in [0, \infty[^n \backslash \left\{(0,0,...0)\right\}, ||\ul{x}||_d=1\right\},$$this set is closed because of closed map lemma and every element inside is larger then $0$, so that such a set admits a non-zero infimum, let us denote it by $\epsilon$. Then for an arbitrary $\ul{x}$, we have $$p_d(\ul{x})=||\ul{x}||_d^dp_d(\frac{\ul{x}}{||\ul{x}||_d})\geq ||\ul{x}||_d^d \epsilon.$$ 
For (iii), Setting $$p_d'(\ul{x})=p_d(\ul{x})-\frac{\epsilon}{2}\sum_{i=1}^{n}x^d_i,$$ it is easy to check $p_d'(\ul{x})\geq ||x||_d^d\epsilon-\frac{\epsilon}{2}\sum_{i=1}^{n}x_i^d>0,\forall \ul{x} \in [0, \infty[^n \backslash \left\{(0,0,...0)\right\}$ and $||\ul{x}||_d=1$, then it follows that $$p_d'(\ul{x})=||\ul{x}||_d^d\frac{p_d'(\ul{x})}{||\ul{x}||_d^d}=||\ul{x}||_d^dp_d'(\frac{\ul{x}}{||\ul{x}||_d})>0,$$for all $\ul{x}$ in the set  $[0, \infty[^n \backslash \left\{(0,0,...0)\right\}$.

For (iv) it is enough to point out that each monomial of degree smaller then $d$ can be bounded above by a term of the form $\alpha\sum_{i=1}^{n}x_i^d$ because of Lemma 1, and conclude by (iii).

(v) is a direct consequence of (iii) and (iv).

\end{proof}

\begin{lemma}
Let $P$ be an elliptic polynomial of degree $d$ and let $p$ be a polynomial with a degree smaller than $d$, then there exists un integer $C$ such that for all $n$-tuples $\ul{x}\in \mathbf{N}_+^n$ with $<\ul{x},\ul{1}>>C$,
$$\frac{|p(\ul{x})|}{P(\ul{x})}<\frac{1}{P(\ul{x})^{\frac{1}{2d}}}$$

\end{lemma} 

\begin{proof}

Let us consider the polynomial $q$ defined by $q=P^{2d-1}-p^{2d}$. We can check that $q$ is of degree $2d^2-d$ and its homogeneous polynomial is uniquely defined by the one of $P$, which is from the fact that the degree of $p^{2d}$ is at most $2d^2-2d$. So that $q$ is an elliptic polynomial, thus there exists un integer $C$ such that $P^{2d-1}(\ul{x})-p^{2d}(\ul{x})>0$ if $<\ul{x},\ul{1}>>C$. As a result, for all $\ul{x}$ with $<\ul{x},\ul{1}>>C$, $$(\frac{p(\ul{x})}{P(\ul{x})})^{2d}=\frac{1}{P(\ul{x})}\frac{p(\ul{x})^{2n}}{P(\ul{x})^{2n-1}}<\frac{1}{P(\ul{x})}.$$

\end{proof}

\section {Proof of the meromorphic continuation}

In this section we prove the main result.

\begin{theorem}
	Let $p$ be an elliptic polynomial of $n$ variables and $(a_{\ul{x}})_{\ul{x} >\ul{0}}$ be $q$-automatic, then for a given $n$-tuple $\ul{\mu}$, the function $\sum_{(\ul{x}) \in \mathbf{N}^n_+} \frac{a_{\ul{x}}\prod_{i=1}^{n}x_i^{\mu_i}}{p(\ul{x})^{s}}$ admits an abscissa of convergence $\sigma$ such that it converges absolutely on the half plane $\Re(s) > \sigma $ and has a meromorphic continuation on whole complex plane. furthermore, the poles of this function (if any) are located on a finite number of left semi-lattices.
\end{theorem}

This result will be obtained by proving several lemmas successively:

\begin{proposition}
		Let $a_{\ul{x}}$ be a $q$-automatic sequence, and $p(\ul{x})=\sum_{\ul{\alpha}}m_{\ul{\alpha}}x^{\ul{\alpha}}$ be a $n$-variable homogeneous elliptic polynomial of degree $d$,  let $\ul{\mu} \in \mathbf{N}^n_{+}$ be a multi-index, for any $\ul{\beta}$ such that $\ul{0} \leq \ul{\beta} \leq \ul{q}$, define $p_{\ul{\beta}}(\ul{x})=q^{-n}(p(q\ul{x}+\ul{\beta})-p(q\ul{x}))$, then for any $k \in \mathbf{N}$, the function $f_{k,\ul{\beta},\ul{\mu}}: s \longrightarrow \sum_{(\ul{x}) \in \mathbf{N}^n_+} \frac{a_{\ul{x}}p_{\ul{\beta}}(\ul{x})^k\prod_{i=1}^{n}x_i^{\mu_i}}{p(\ul{x})^{s+k}}$ admits an abscissa of convergence $\sigma_{k,\ul{\beta}, \ul{\mu}}$ such that $f_{k,\ul{\beta},\ul{\mu}}$ converges absolutely to an holomorphic function on the right half-plane $\Re(s)>\sigma_{k,\ul{\beta},\ul{\mu}}$.
\end{proposition}

\begin{proof}
	
	We firstly prove that $f_{0,\ul{0},\ul{0}}(s)$ converge when $\Re(s)>n$.
	\begingroup\small\begin{equation}
		\begin{aligned}
			|f_{0,\ul{0},\ul{0}}(s)|=&|\sum_{(\ul{x}) \in \mathbf{N}^n_+} \frac{a_{\ul{x}}}{p(\ul{x})^{s}}|\leq \sum_{(\ul{x}) \in \mathbf{N}^n_+} \frac{|a_{\ul{x}}|}{p(\ul{x})^{\Re(s)}} \leq \sum_{(\ul{x}) \in \mathbf{N}^n_+} \frac{|a_{\ul{x}} |}{(\alpha<\ul{x},\ul{1}>)^{\Re(s)}}\;(*)\\
			&\leq \frac{\max(|a_{\ul{x}}|)}{(\alpha)^{\Re(s)}}(\sum_{<\ul{x},\ul{1}> < n} \frac{1}{<\ul{x},\ul{1}>^{\Re(s)}}+\sum_{<\ul{x},\ul{1}> \geq n} \frac{1}{<\ul{x},\ul{1}>^{\Re(s)}})\\
			&\leq \frac{\max(|a_{\ul{x}}|)}{(\alpha)^{\Re(s)}}(\sum_{<\ul{x},\ul{1}> < n} \frac{1}{<\ul{x},\ul{1}>^{\Re(s)}}+ \sum_{m\geq n}\frac{\binom{m+n-1}{n-1}}{m^{\Re(s)}}) \\
			&\leq \frac{\max(|a_{\ul{x}}|)}{(\alpha)^{\Re(s)}}(\sum_{<\ul{x},\ul{1}> < n} \frac{1}{<\ul{x},\ul{1}>^{\Re(s)}}+ \sum_{m\geq n}\frac{m^{n-1}}{m^{\Re(s)}}) \\
			&\leq
			\frac{\max(|a_{\ul{x}}|)}{(\alpha)^{\Re(s)}}(\sum_{<\ul{x},\ul{1}> < n} \frac{1}{<\ul{x},\ul{1}>^{\Re(s)}}+ \sum_{m\geq n}\frac{1}{m^{\Re(s)+1-n}}).\\
		\end{aligned}
	\end{equation}\endgroup
Inequality $(*)$ is obtained by Lemma 2.2 (iii) and taking $\alpha$ as it was in the lemma; and the sum $\sum_{m\geq n}\frac{1}{m^{\Re(s)+1-n}}$ exists and is bounded when $\Re(s)>n$.

For any $ \ul{\beta}$ such that $\ul{0} \leq \ul{\beta} \leq \ul{q}$,  we remark that $\prod_{i=0}^{n}(x_i+\beta_i)^{k_i}=\sum_{\ul{l} \leq \ul{k}}C_{\ul{l}}\prod_{i=0}^{n}(x_i)^{l_i}$ with $C_{\ul{l}} < q^n$, which shows that all monomials of the polynomial $p_{\ul{\beta}}$ have a degree not larger than $d-1$. Lemma 2.3 leads $|\frac{p_{\ul{\beta}}(\ul{x})}{p(\ul{x})}| < \frac{1}{p(\ul{x})}^{\frac{1}{2d}}$ for all $\ul{x}$ satisfying $<\ul{x},\ul{1}>>C_1$, with $C_1$ defined in Lemma 2.3. While Lemma 2.1 and Lemma 2.2 (v) yield that there exists an integer $l$ such that $|\frac{\prod_{i=1}^{n}x_i^{\mu_i}}{p^l(\ul{x})}| \to 0$ when $|\ul{x}| \to \infty$. As a result, there exists $C_2$ such that for all $<\ul{x},\ul{1}>\geq C_2, |\frac{\prod_{i=1}^{n}x_i^{\mu_i}}{p^l(\ul{x})}| < 1$. Taking $C=\max(C_1,C_2)$,
	\begingroup\small\begin{equation}
		\begin{aligned}
\sum_{(\ul{x}) \in \mathbf{N}^n_+} |\frac{a_{\ul{x}}p_{\ul{\beta}}^k(\ul{x})\prod_{i=1}^{n}x_i^{\mu_i} }{p(\ul{x})^{s+k}}|&\leq \sum_{(\ul{x}) \in \mathbf{N}^n_+} \frac{a_{\ul{x}}\prod_{i=1}^{n}x_i^{\mu_i}}{|p^{\Re(s)}(\ul{x})|}|\frac{p_{\ul{\beta}}(\ul{x})}{p(\ul{x})}|^k \\
&\leq \sum_{<\ul{x},\ul{1}> < C} \frac{a_{\ul{x}}\prod_{i=1}^{n}x_i^{\mu_i}}{|p^{\Re(s)}(\ul{x})|}|\frac{p_{\ul{\beta}}(\ul{x})}{p(\ul{x})}|^k+\sum_{<\ul{x},\ul{1}> \geq C} \frac{a_{\ul{x}}}{p^{\Re(s)-l+\frac{k}{2d}}(\ul{x})}.
		\end{aligned}
	\end{equation}\endgroup
With $k$ a constant in $\mathbf{N}^{+}$, the above function converges to a holomorphic function on the half plane $\Re(s)>n+l-\frac{k}{2d}$. Furthermore, for all $b>k$, $\sum_{<\ul{x},\ul{1}> \geq x_1} |\frac{a_{\ul{x}}p_{\ul{\beta}}^b(\ul{x})}{p^{s-l+b}(\ul{x})}|$ is bounded on this half plane.
\end{proof}
	
\begin{proposition}
	With the same notation as above, if $p$ is an homogeneous polynomial, then the function $F: s \longrightarrow \sum_{(\ul{x}) \in \mathbf{N}^n_+} \frac{a_{\ul{x}}\prod_{i=1}^{n}x_i^{\mu_i}}{p(\ul{x})^{s}}$ admits a meromorphic continuation on the whole complex plane.
\end{proposition}
	
\begin{proof}
In this proof, we consider the $q$-automatic sequence $(a_{\ul{x}})_{\ul{x}\geq \ul{0}}$ as itself multiplied by a constant sequence $(b_{\ul{x}})_{\ul{x}\geq \ul{0}}=1$, which is a $q$-periodic sequence. Because of Proposition 2.1, the $q$-kernel of this sequence admits a completion, we can define a sequence of vectors $(A_{\ul{x}})_{\ul{x}\geq \ul{0}}$ and the matrices of transition on this completion as  in Theorem 2.1.

 For any $\ul{\mu} \in \mathbf{N}^n_{+}$, there exists some $l\in \mathbf{Z}$ such that $<\ul{\mu}, \ul{1}> < ld$ and a constant $N_0 \in \mathbf{N}$ such that $C<N_0nq$, where $C$ is defined as in the previous lemma.
 
\begingroup\small\begin{equation}
		\begin{aligned}
F_{\ul{\mu}}(s)&=\sum_{(\ul{x})\in \mathbf{N}^n_+}\frac{A_{\ul{x}}\prod_{i=1}^{n}x_i^{\mu_i}}{p(\ul{x})^{s}}=\sum_{(\ul{x})<(\ul{N_0q})} \frac{A_{\ul{x}}\prod_{i=1}^{n}x_i^{\mu_i}}{p(\ul{x})^{s}}+\sum_{(\ul{y})<(\ul{q})}\sum_{(\ul{z}) \in \mathbf{N}^n/\left\{\ul{t}<\ul{N_0}\right\}} \frac{A_{q\ul{z}+\ul{y}}\prod_{i=1}^{n}(qz_i+y_i)^{\mu_i}}{p^{s}(q\ul{z}+\ul{y})}\\
			&=\sum_{(\ul{x})<(\ul{N_0q})} \frac{A_{\ul{x}}\prod_{i=1}^{n}x_i^{\mu_i}}{p(\ul{x})^{s}}+\sum_{(\ul{y})<(\ul{q})}\sum_{(\ul{z}) \in \mathbf{N}^n/\left\{\ul{t}<\ul{N_0}\right\}} \frac{A_{q\ul{z}+\ul{y}}\prod_{i=1}^{n}(qz_i)^{\mu_i}}{p^{s}(q\ul{z}+\ul{y})}\\
&+\sum_{(\ul{\psi})<(\ul{\mu})}\sum_{(\ul{y})<(\ul{q})}\sum_{(\ul{z}) \in \mathbf{N}^n/\left\{\ul{t}<\ul{N_0}\right\}} \frac{A_{q\ul{z}+\ul{y}}C_{\ul{\psi},\ul{y}}\prod_{i=1}^{n}(qz_i)^{\psi_i}}{p^{s}(q\ul{z}+\ul{y})},\\
		\end{aligned}
	\end{equation}\endgroup
where 	$C_{\ul{\psi},\ul{y}}$ is uniquely defined by $\ul{y}$ for given $\ul{\psi}$. So the sequence $(C_{\ul{\psi}}(\ul{x}))_{\ul{x}>\ul{0}}$ defined by $$ C_{\ul{\psi}}(\ul{x})=C_{\ul{\psi},\ul{y}} \ \text{with} \ x_i \equiv y_i \mod q, 1 \leq i \leq n$$ is periodic as a function of $\ul{x}$, we let $Res_{\ul{\mu}}(s)$  denote the term $$Res_{\ul{\mu}}(s)=\sum_{(\ul{\psi})<(\ul{\mu})}\sum_{(\ul{y})<(\ul{q})}\sum_{(\ul{z}) \in \mathbf{N}^n/\left\{\ul{t}<\ul{N_0}\right\}} \frac{A_{q\ul{z}+\ul{y}}C_{\ul{\psi},\ul{y}}\prod_{i=1}^{n}(qz_i)^{\psi_i}}{p^{s}(q\ul{z}+\ul{y})}.$$ 
Remarking that all sequences $(b_{\ul{\psi}}({\ul{x}}))_{\ul{x}>\ul{0}}$ defined by $b_{\ul{\psi}}({\ul{x}})=A_{q\ul{z}+\ul{y}}C_{\ul{\psi},\ul{y}}$ if $\ul{x}=q\ul{z}+\ul{y}$ are in the form of a product of a specific $q$-automatic sequence by a $\ul{q}$-periodic one, because of Proposition 2.1, such sequences admit a unique completion the same one as $(A_{\ul{x}})_{\ul{x} \in \mathbf{N}_+^n}$  has, and the transition matrices on this completion do not depend on the choice of the $\ul{q}$-periodic sequences $(C_{\ul{\psi}}(\ul{x}))_{\ul{x}>\ul{0}}$.

Using the transition matrices, we have:
	\begingroup\small\begin{equation}
		\begin{aligned}
F_{\ul{\mu}}(s)&=\sum_{(\ul{x})\in \mathbf{N}^n/(\ul{0})}\frac{A_{\ul{x}}\prod_{i=1}^{n}x_i^{\mu_i}}{p(\ul{x})^{s}}=\sum_{(\ul{x})<(\ul{N_0q})} \frac{A_{\ul{x}}\prod_{i=1}^{n}x_i^{\mu_i}}{p(\ul{x})^{s}}+\sum_{(\ul{y})<(\ul{q})}\sum_{(\ul{z}) \in \mathbf{N}^n/\left\{\ul{t}<\ul{N_0}\right\}} \frac{A_{q\ul{z}+\ul{y}}\prod_{i=1}^{n}(qz_i+y_i)^{\mu_i}}{p^{s}(q\ul{z}+\ul{y})}\\
			&=\sum_{(\ul{x})<(\ul{N_0q})} \frac{A_{\ul{x}}\prod_{i=1}^{n}x_i^{\mu_i}}{p(\ul{x})^{s}}+\sum_{(\ul{y})<(\ul{q})}\sum_{(\ul{z}) \in \mathbf{N}^n/\left\{\ul{t}<\ul{N_0}\right\}} \frac{A_{q\ul{z}+\ul{y}}\prod_{i=1}^{n}(qz_i)^{\mu_i}}{p^{s}(q\ul{z}+\ul{y})}+Res_{\mu}(s)\\
			&=\sum_{(\ul{x})<(\ul{N_0q})} \frac{A_{\ul{x}}\prod_{i=1}^{n}x_i^{\mu_i}}{p(\ul{x})^{s}}+\sum_{(\ul{y})<(\ul{q})}M_{\ul{y}}\sum_{(\ul{z}) \in \mathbf{N}^n/\left\{\ul{t}<\ul{N_0}\right\}} \frac{A_{q\ul{z}}\prod_{i=1}^{n}(qz_i)^{\mu_i}}{p^{s}(q\ul{z})}\frac{1}{(1+\frac{p_{y}(\ul{z})}{p(\ul{z})})^s}+Res_{\mu}(s)\\
			&=\sum_{(\ul{x})<(\ul{N_0q})} \frac{A_{\ul{x}}\prod_{i=1}^{n}x_i^{\mu_i}}{p(\ul{x})^{s}}+\sum_{(\ul{y})<(\ul{q})}M_{\ul{y}}\sum_{(\ul{z}) \in \mathbf{N}^n/\left\{\ul{t}<\ul{N_0}\right\}} \frac{A_{\ul{z}}\prod_{i=1}^{n}(qz_i)^{\mu_i}}{p^{s}(q\ul{z})}\sum_{k \geq 0}\binom{s+k-1}{k}(\frac{-p_{y}(\ul{z})}{p(\ul{z})})^{k}+Res_{\mu}(s)\\
			&=\sum_{(\ul{x})<(\ul{N_0q})} \frac{A_{\ul{x}}\prod_{i=1}^{n}x_i^{\mu_i}}{p(\ul{x})^{s}}+q^{<\ul{\zeta},\ul{1}>-ns}\sum_{(\ul{y})<(\ul{q})}M_{\ul{y}}\sum_{k \geq 0}\binom{s+k-1}{k}\sum_{(\ul{z}) \in \mathbf{N}^n/\left\{\ul{t}<\ul{N_0}\right\}} \frac{A_{\ul{z}}\prod_{i=1}^{n}(z_i)^{\mu_i}(-p_{y}(\ul{z}))^k}{(p(\ul{z}))^{s+k}}\\
			&+Res_{\mu}(s).\\
		\end{aligned}
	\end{equation}\endgroup	
The above equation gives:
	\begingroup\small\begin{equation}
		\begin{aligned}
&(Id-q^{<\ul{\mu},\ul{1}>-ns}\sum_{(\ul{y})<(\ul{q})}M_{\ul{y}})F_{\ul{\mu}}(s)=q^{<\ul{\mu},\ul{1}>-ns}\sum_{(\ul{y})<(\ul{q})}M_{\ul{y}}\sum_{k \geq 1}\binom{s+k-1}{k}\sum_{(\ul{z}) \in \mathbf{N}^n/\left\{\ul{t}<\ul{N_0}\right\}} \frac{A_{\ul{z}}\prod_{i=1}^{n}(z_i)^{\mu_i}(-p_{y}(\ul{z}))^k}{(p(\ul{z}))^{s+k}}\\
&+\sum_{(\ul{x})<(\ul{N_0q})} \frac{A_{\ul{x}}\prod_{i=1}^{n}x_i^{\mu_i}}{p(\ul{x})^{s}}+Res_{\mu}(s).
		\end{aligned}
	\end{equation}\endgroup	
By multiplying by $com^t(Id-q^{<\ul{\mu},\ul{1}>-ns}\sum_{(\ul{y})<(\ul{q})}M_{\ul{y}})$ on both side, we have:
\begin{equation}
	\begin{aligned}
&det(Id-q^{<\ul{\mu},\ul{1}>-ns}\sum_{(\ul{y})<(\ul{q})}M_{\ul{y}})F_{\ul{\mu}}(s)=com^t(Id-q^{<\ul{\mu},\ul{1}>-ns}\sum_{(\ul{y})<(\ul{q})}M_{\ul{y}})(\sum_{(\ul{x})<(\ul{N_0q})} \frac{A_{\ul{x}}\prod_{i=1}^{n}x_i^{\mu_i}}{p(\ul{x})^{s}}+Res_{\mu}(s)\\
&+q^{<\ul{\mu},\ul{1}>-ns}\sum_{(\ul{y})<(\ul{q})}M_{\ul{y}}\sum_{k \geq 1}\binom{s+k-1}{k}\sum_{(\ul{z}) \in \mathbf{N}^n/\left\{\ul{t}<\ul{N_0}\right\}} \frac{A_{\ul{z}}\prod_{i=1}^{n}(z_i)^{\mu_i}(-p_{y}(\ul{z}))^k}{(p(\ul{z}))^{s+k}}).
	\end{aligned}
\end{equation}
		
Because of Proposition 2.2, the infinite sum $Res_{\mu}(s)$ converges absolutely when $\Re(s) >l+n-\frac{1}{2d}$, and the infinite sum $\sum_{(\ul{z}) \in \mathbf{N}^n/\left\{\ul{t}<\ul{N_0}\right\}} \frac{A_{\ul{z}}\prod_{i=1}^{n}(z_i)^{\mu_i}(-p_{y}(\ul{z}))^k}{(p(\ul{z}))^{s+k}}$ is also convergent and bounded when $\Re(s) >l+n-\frac{k}{2d}$. Equation 2.5 shows that all terms on the right-hand side present a meromorphic continuity for $\Re(s) > l+n-\frac{1}{2d}$, so that $F_{\ul{\mu}}(s)$ has a meromorphic continuation on the half-plane $\Re(s) > l+n-\frac{1}{2d}$. 

To guarantee that this argument works recursively over all rational numbers of type $l+n-\frac{k}{2d}$, we have to check that the meromorphic continuity of the $Res_{\mu}(s)$ can also be extended in this way. Using once more the above argument over $Res_{\mu}(s)$, we can deduce that this infinite sum can be extended as a meromorphic function on the half-plane $\Re(s) > l+n-\frac{2}{2d}$, however, with a new term  ``$Res_{\mu}(s)$'', let us call it $Res'_{\mu}(s)$. Once more, we have to do the same thing for $Res'_{\mu}(s)$.  But remarking that after each operation, the degree of the monomial at the numerator decreases strictly, so after finitely many times of such operation, the term $Res$ vanishes. This fact guarantees that the iteration can be done successively to prove the meromorphic continuation of $F_{\ul{\mu}}(s)$ on the whole complex plane.  

Furthermore, the poles of such a function can only be located at the zeros of the function $s \longrightarrow det(Id-q^{<\ul{\mu},\ul{1}>-ns}\sum_{(\ul{y})<(\ul{q})}M_{\ul{y}})$ for an arbitrary $\ul{\mu} \in \mathbf{N}^n_{+}$, so we conclude that all poles of function $F(s)$ are located in the set $$s=\frac{1}{n}(\frac{\log \lambda}{\log q} + \frac{2ik\pi}{\log q} -l),$$
with $\lambda$ any eigenvalue of the matrix $\sum_{(\ul{y})<(\ul{q})}M_{\ul{y}},k \in \mathbf{Z},l \in \mathbf{Z}$ and $\log$ is defined as complex logarithm.  

\end{proof}

\begin{proof}[Proof of Theorem 2.2]
	Let us write the polynomial $p(\ul{x})$ in the form $p(\ul{x})=p_d(\ul{x})+Res(\ul{x})$ where $p_d(\ul{x})$ is the homogeneous polynomial with maximum degree of $p(\ul{x})$, say $d$. By Lemma 1, $\frac{Res(\ul{x})}{p_d(\ul{x})} = \mathcal{O}(|x|^{-1/d})$. So for a given number $m \in \mathbf{R}$, there exist $x_0 \in \mathbf{N}_{+}$ and a positive integer $k_0$ such that for all $|\ul{x}| > x_0$ and all $k >k_0$, $|\frac{Res^k(\ul{x})\prod_{i=1}^{n}x_i^{\mu_i}}{p_d^{k+m}(\ul{x})}| <(\frac{1}{2^k})$. For any given half-plane $\left\{s|\mathcal{R}_e(s) > m,m \in \mathbf{R}\right\}$, take an integer $s_0 > \max\left\{k_0,|m|\right\}$, we can compute that  

	\begin{equation*}
		\begin{aligned}
		&|\sum_{|\ul{x}| > x_0} \frac{A_{\ul{x}}\prod_{i=1}^{n}x_i^{\mu_i}}{(p_d(\ul{x}))^s}\sum_{k=s_0+1}^{\infty}\binom{-s}{k}\frac{Res^k(\ul{x})}{p_d^k(\ul{x})}|=|\sum_{|\ul{x}| > x_0} \frac{A_{\ul{x}}}{(p_d(\ul{x}))^{s-m}}\sum_{k=s_0+1}^{\infty}\binom{-s}{k}\frac{Res^k(\ul{x})\prod_{i=1}^{n}x_i^{\mu_i}}{p_d^{k+m}(\ul{x})}|\\
&<\sum_{|\ul{x}| > x_0} \frac{A_{\ul{x}}}{(p_d(\ul{x}))^{\Re(s-m)}}|\sum_{k=s_0+1}^{\infty}\binom{-\Re(s)}{k}|\frac{Res^k(\ul{x})\prod_{i=1}^{n}x_i^{\mu_i}}{p_d^{k+m}(\ul{x})}\\
&<(\sum_{|\ul{x}| > x_0} \frac{A_{\ul{x}}}{(p_d(\ul{x}))^{\Re(s-m)}})\mathcal{O}_{s_0}(\frac{1+|s|^{s_0+1}}{2^{s_0}}).
		\end{aligned}
	\end{equation*}
The above fact shows that the function $\phi(s)=\sum_{|\ul{x}| > x_0} \frac{A_{\ul{x}}\prod_{i=1}^{n}x_i^{\mu_i}}{(p_0(\ul{x}))^s}\sum_{k=s_0+1}^{\infty}\binom{-s+k}{k}\frac{Res^k(\ul{x})}{p_0^k(\ul{x})}$ admits a holomorphic continuation over the half-plane $\left\{s|\Re(s) > m,m \in \mathbf{R}\right\}$. Now let us consider the equivalence as below:
	
	\begin{equation}
		\begin{aligned}
		\sum_{(\ul{x})\in \mathbf{N}^n_{+}/(\ul{0})} \frac{A_{\ul{x}}\prod_{i=1}^{n}x_i^{\mu_i}}{p(x)^{s}}&=\sum_{|\ul{x}| \leq x_0} \frac{A_{\ul{x}}\prod_{i=1}^{n}x_i^{\mu_i}}{p(x)^{s}}+\sum_{|\ul{x}| > x_0} \frac{A_{\ul{x}}\prod_{i=1}^{n}x_i^{\mu_i}}{(p_0(\ul{x}))^s}\frac{1}{(1+\frac{Res(\ul{x})}{p_0(\ul{x})})^s}\\
		&=\sum_{|\ul{x}| \leq x_0} \frac{A_{\ul{x}}\prod_{i=1}^{n}x_i^{\mu_i}}{p(x)^{s}}+\sum_{|\ul{x}| > x_0} \frac{A_{\ul{x}}\prod_{i=1}^{n}x_i^{\mu_i}}{(p_0(\ul{x}))^s}\sum_{k=0}^{s_0}\binom{-s+k}{k}\frac{Res^k(\ul{x})}{p_0^k(\ul{x})}+\phi(s).\\
		\end{aligned}
	\end{equation}
	For each $k \in \mathbf{N}$,
		\begin{equation}
		\begin{aligned}
		\sum_{|\ul{x}| > x_0} \frac{A_{\ul{x}}\prod_{i=1}^{n}x_i^{\mu_i}}{(p_0(\ul{x}))^s}\frac{Res^k(\ul{x})}{p_0^k(\ul{x})}=\sum_{\ul{j} \leq k\ul{\mu}}\sum_{|\ul{x}| > x_0} \frac{A_{\ul{x}}\mathcal{C}_{\ul{i}}}{(p_0(\ul{x}))^{s+k}}\prod_{i=1}^{n}x_i^{j_i},\\
		\end{aligned}
		\end{equation}
where $\mathcal{C}_{\ul{i}}$ are constants depending on $k$, and $s \longrightarrow \sum_{|\ul{x}| > x_0} \frac{A_{\ul{x}}\mathcal{C}_{\ul{i}}}{(p_0(\ul{x}))^{s+k}}\prod_{i=1}^{n}x_i^{j_i}$ are meromorphic functions because of the previous lemma. As there are finitely many meromorphic function in (2.8), we can conclude that for every $k>0$, $s \longrightarrow \sum_{|\ul{x}| > x_0} \frac{A_{\ul{x}}\prod_{i=1}^{n}x_i^{\mu_i}}{(p_0(\ul{x}))^s}Res^k(\ul{x})$ is meromorphic. This fact implies that for an arbitrary $s_0 \in \mathbf{R}$ the function $f(s)$ is a finite sum of meromorphic functions on the half-plane $\Re(s)>s_0$, so $f(s)$ itself is meromorphic on this half plane. As a result, the function $s \longrightarrow \sum_{(\ul{x})\in \mathbf{N}^n/(\ul{0})} \frac{a_{\ul{x}}\prod_{i=1}^{n}x_i^{\mu_i}}{p(\ul{x})^{s}}$ is meromorphic on the whole complex plane.

\end{proof}

\begin{proposition}
	Let $f(s)=\sum_{(\ul{x})\in \mathbf{N}^n/(\ul{0})} \frac{a_{\ul{x}}\prod_{i=1}^{n}x_i^{\mu_i}}{p(\ul{x})^{s}}$ be the function defined as in Theorem 2.2. Let $s_0$ be its first pole on the axis of real numbers counting from plus infinity to minus infinity. Then the function $H(s)$ has a simple pole at this point.
\end{proposition}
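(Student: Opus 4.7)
The plan is to exploit the functional identity obtained in the course of proving Lemma~2, namely
$$\det\!\Bigl(Id - q^{\langle \ul{\mu}, \ul{1}\rangle - ns} \sum_{\ul{y} < \ul{q}} M_{\ul{y}}\Bigr)\, F_{\ul{\mu}}(s) \;=\; R(s),$$
where $R(s)$ is a combination of finite head sums and tails of the form appearing in Lemma~1, and is holomorphic in a half-plane strictly containing $s_{0}$ on its boundary. The order of the pole of $f$ at $s_{0}$ is therefore controlled by the order of vanishing of the determinantal factor at $s_{0}$, minus any cancellation from a zero of $R$ at the same point.

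First I would identify which eigenvalue of $M := \sum_{\ul{y} < \ul{q}} M_{\ul{y}}$ produces the rightmost real pole. The zeros of $\det(Id - q^{\langle \ul{\mu}, \ul{1}\rangle - ns} M)$ correspond exactly to the equations $\lambda = q^{ns - \langle \ul{\mu}, \ul{1}\rangle}$ with $\lambda$ an eigenvalue of $M$. Since every block $M_{\ul{y}}$ has precisely one $1$ per row, $M$ is a non-negative integer matrix whose rows all sum to $q^{n}$; hence the constant vector $\ul{1}$ is an eigenvector with eigenvalue $q^{n}$, and the standard row-sum bound gives $\rho(M)=q^{n}$. The corresponding value of $s$ is the largest real root of the determinant, which must then coincide with $s_{0}$.

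To conclude simplicity I would invoke Perron-Frobenius after first passing to a minimal representation of the automaton, i.e.\ collapsing states of the kernel that define identical subsequences. On the reduced transition matrix $\widetilde{M}$ the underlying digraph is strongly connected, so the dominant eigenvalue $q^{n}$ becomes algebraically simple; consequently the determinantal factor has a simple zero at $s_{0}$. A complementary verification shows that $R(s_{0})\neq 0$: pairing $R(s_{0})$ with the strictly positive Perron left-eigenvector of $\widetilde{M}$ turns each summand into a positive quantity, so cancellation is excluded and the pole of $f$ is of order exactly one.

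The principal obstacle is the algebraic simplicity step. Geometric simplicity of $q^{n}$ is automatic from the explicit eigenvector $\ul{1}$, but excluding a non-trivial Jordan block demands irreducibility, which need not hold for the original, unreduced kernel. Making the minimisation procedure rigorous, and checking that the functional equation transports faithfully from the kernel to its minimal quotient without creating spurious zeros of $R$ at $s_{0}$, is where the real care lies; the remaining steps are essentially bookkeeping once the spectral picture of $\widetilde{M}$ is settled.
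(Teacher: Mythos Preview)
Your overall strategy --- control the pole order via the determinantal factor in the functional equation and identify the rightmost real zero with the spectral radius $q^{n}$ of $M=\sum_{\ul{y}<\ul{q}}M_{\ul{y}}$ --- matches the paper. The divergence, and the genuine gap, is in how you argue simplicity.

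You propose to pass to a ``minimal representation'' of the automaton and then claim that the resulting transition digraph is strongly connected, so that Perron--Frobenius yields \emph{algebraic} simplicity of $q^{n}$. This implication fails: minimality of an automaton (no two states defining the same subsequence) does \emph{not} force strong connectivity of the sum matrix $M$. A two-state automaton in which state~1 feeds into state~2 and state~2 is absorbing, with distinct outputs, is already minimal yet reducible; there the eigenvalue $q^{n}$ has algebraic multiplicity two. So the step ``minimise $\Rightarrow$ irreducible $\Rightarrow$ algebraically simple'' breaks down, and with it your bound on the vanishing order of $\det(Id-q^{\langle\ul{\mu},\ul{1}\rangle-ns}M)$ at $s_{0}$.

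The paper avoids irreducibility altogether. Writing $B=q^{-n}M$, which is row-stochastic, one has the classical factorisation
\[
p_{B}(X)=(-1)^{t}\,\pi_{B}(X)\,\Delta(X),
\]
where $\Delta$ is the monic gcd of the entries of the transposed comatrix of $B-XI$. Dividing both sides of the functional equation by $\Delta(q^{n(s-1)+\langle\ul{\mu},\ul{1}\rangle})$ replaces the characteristic polynomial on the left by the \emph{minimal} polynomial $\pi_{B}$, while the right-hand side stays holomorphic (each comatrix entry is divisible by $\Delta$). Now one only needs that $1$ is a \emph{simple} root of $\pi_{B}$, i.e.\ that the eigenvalue $1$ of a stochastic matrix is semisimple. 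This holds for \emph{every} stochastic matrix, irreducible or not, because $\|B^{k}\|_{\infty}=1$ for all $k$ rules out any Jordan block of size $\ge 2$ at $1$ (such a block would force polynomial growth of $B^{k}$). Hence $\pi_{B}$ has a simple zero at $s_{0}$, and since $s_{0}$ is by hypothesis actually a pole, the order is exactly one --- no separate verification that $R(s_{0})\neq 0$ is required.

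In short: drop the minimisation/Perron--Frobenius detour and instead pass from the characteristic to the minimal polynomial via the comatrix-gcd identity; semisimplicity of $1$ for stochastic matrices then gives the simple zero directly.
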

	
\begin{proof}
 We recall a classical result on matrices (see \cite{minc}): Let $B$ be a matrix of size $t \times t$ over any commutative field, $p_B(X)$ be its characteristic polynomial, and $\pi_B(X)$ be its monic minimal polynomial. Let $\Delta(X)$ be the monic gcd of the entries of (the
transpose of) the comatrix of the matrix $(B-XI)$, then:
$$p_B(X)=(-1)^t\pi_B(X)\Delta(X)$$
We let $B$ denote the matrix $(nq)^{-1}\sum_{(\ul{y})<(\ul{q})}M_{\ul{y}}$ and by $T$ its size. By dividing by $\Delta(q^{n(s-1)+<\ul{\mu},\ul{1}>})$ both sides of Formula 2.6, we get:
 \begin{equation}
	\begin{aligned}
&\pi_B(q^{n(s-1)+<\ul{\mu},\ul{1}>})H(s)=\frac{com^t(q^{n(s-1)+<\ul{\mu},\ul{1}>}Id-\sum_{(\ul{y})<(\ul{q})}M_{\ul{y}})}{\Delta(q^{n(s-1)+<\ul{\mu},\ul{1}>})}
(\sum_{(\ul{x})<(\ul{N_0q})} \frac{A_{\ul{x}}\prod_{i=1}^{n}x_i^{\mu_i}}{p(\ul{x})^{s}}+Res_{\mu}(s)\\
&+q^{<\ul{\mu},\ul{1}>-ns}\sum_{(\ul{y})<(\ul{q})}M_{\ul{y}}\sum_{k \geq 1}\binom{-s+k}{k}\sum_{(\ul{z}) \in \mathbf{N}^n/\left\{\ul{t}<\ul{N_0}\right\}} \frac{A_{\ul{z}}\prod_{i=1}^{n}(z_i)^{\mu_i}(-p_{y}(\ul{z}))^k}{(p(\ul{z}))^{s+k}}).
	\end{aligned}
\end{equation}
The right-hand side of the above function is holomorphic when $\Re(s) > s_0$. As $s_0$ is the first pole of $H(s)$ on the real axis counting from plus infinity, it is a zero of the function $\pi_B(q^{n(s-1)+<\ul{\mu},\ul{1}>})$ associated with the eigenvalue 1 of the matrix $B$. On the other hand, as $B$ is a stochastic matrix, $\pi_B(x)$ has a simple root at 1, so the function $\pi_B(q^{n(s-1)+<\ul{\mu},\ul{1}>})$ has a simple root at $s_0$ which concludes the proposition.

\end{proof}

\section{Review and remarks}

The critical point of the above proof is the development of the term $(1-\frac{p_d(\ul{x}+\ul{\mu})-p_d(\ul{x})}{p_d(\ul{x})})^{-s}$ as an infinite sum, which works only if $|\frac{p_d(\ul{x}+\ul{\mu})-p_d(\ul{x})}{p_d(\ul{x})}| <1$. As we have to use this fact successively to deal with the term $Res(s)$ in Proposition 2.3, what we need actually is that $|\frac{\partial_{\ul{\mu}}(p_d)}{p_d}|$ is bounded by $1$ for all $\ul{\mu}$ such that $<\ul{\mu},\ul{1}> \leq d$ when $||\ul{x}||$ is large. The assumption of ellipticity of the polynomial is a particular case of the above propriety. So we may expect to achieve the same result under the assumption $$|\frac{\partial_{\ul{\mu}}(p_d)}{p_d}(\ul{x})| <1, \text{for all}\; <\ul{\mu},\ul{1}> \leq d,x_i \geq 1.$$
We may compare this assumption with that in \cite{ess}, saying
$$\frac{\partial_{\ul{\mu}}(p_d)}{p_d}(\ul{x}) =\mathcal{O}(1),x_i \geq 1,$$
which is the weakest assumption known to have a meromorphic continuation of Dirichlet series $\sum_{\ul{x}\in \mathbf{N}^n_+} \frac{1}{P(\ul{x})^s}$.

\section{Infinite products}

Let $P(x)=\sum_{i=0}^{d} a_ix^i$ be a polynomial which does not have zeros on $\mathbb{Q}$ and $\tilde{P}(x)$ be the polynomial defined by $\tilde{P}(x)=\sum_{i=0}^{d-1} -\frac{a_i}{a_d}x^{n-i}$, by definition,  we have $P(x)=a_dx^d-a_dx^d\tilde{P}(\frac{1}{x})$. Let us define $c_i=\frac{a_i}{a_d}$ for all $i=0,1,...,d-1$.

In this section we consider two Dirichlet series generated by $1$-index automatic sequences:$$f(s)=\sum_{n=0}^{\infty} \frac{(\zeta)^{S_q(n)}}{(P(n+1))^s}$$
$$g(s)=\sum_{n=1}^{\infty} \frac{(\zeta)^{S_q(n)}}{(P(n))^s},$$
where $q$ and $r$ are two integers satisfying $2\leq r \leq q$ and $r$ divides $q$, $\zeta$ is a $r$-th root of unity, such that $\zeta \neq 1$, $S_q(n)$  is the sum of digits of $n$ in the $q$-ary expansion satisfying $S_q(0)=0$ and $S_q(qn+a)=S_q(n)+a$ for $0\leq a\leq q-1$.\\

Let us define:
$$\phi(s)=\sum_{n=0}^{\infty} \frac{(\zeta)^{S_q(n)}}{(n+1)^s}$$
$$\psi(s)=\sum_{n=1}^{\infty} \frac{(\zeta)^{S_q(n)}}{(n)^s}.$$
It is proved in \cite{jpc} that $\phi$ and $\psi$ have holomorphic continuations to the whole complex plane, and $\psi(s)(q^s-1)=\phi(s)(\zeta q^s-1)$ for all $s \in \mathbb{C}$. \\

\begin{proposition}

$f$ and $g$ also have holomorphic continuations to the whole complex plane if $c_1, c_2,...$ satisfy $max{|c_i|} < \frac{1}{d}$.

\end{proposition}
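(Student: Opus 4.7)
The plan is to exploit the hypothesis $\max_i|c_i|<1/d$ as a genuine smallness condition, letting me expand $P(n)^{-s}$ as a binomial series whose terms are values of the already-entire function $\psi$ (resp.\ $\phi$). For $n\geq 1$ I would first factor $P(n)=a_d\,n^d\bigl(1-\tilde{P}(1/n)\bigr)$ and observe that the hypothesis forces
\[
|\tilde{P}(1/n)|\;\leq\;\sum_{i=0}^{d-1}\frac{|c_i|}{n^{d-i}}\;\leq\;d\max_i|c_i|\;=:\;\delta\;<\;1
\]
uniformly in $n\geq 1$. The generalized binomial theorem then yields, at least formally,
\[
g(s)\;=\;a_d^{-s}\sum_{k\geq 0}\binom{-s}{k}(-1)^{k}\,T_k(s),\qquad T_k(s):=\sum_{n\geq 1}\frac{\zeta^{s_q(n)}\,\tilde{P}(1/n)^{k}}{n^{ds}},
\]
and the same formula with $\phi$ and $n+1$ in place of $\psi$ and $n$ for $f$.

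Next I would unpack $T_k$. Expanding $\tilde{P}(1/n)^{k}$ by the multinomial theorem produces only monomials of the form $(\text{constant})\cdot n^{-m}$ with $k\leq m\leq dk$, so
\[
T_k(s)\;=\;\sum_{m=k}^{dk}\beta_{k,m}\,\psi(ds+m),\qquad \sum_{m}|\beta_{k,m}|\;\leq\;\delta^{k}.
\]
Each $T_k$ is therefore entire by the entirety of $\psi$ proved in [2], and the question reduces to the locally uniform convergence of the outer series on $\mathbb{C}$.

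For that last step I would fix a compact $K\subset\mathbb{C}$ and split the sum at an integer $k_0=k_0(K)$ large enough that $\mathrm{Re}(ds+m)>2$ whenever $s\in K$, $k\geq k_0$ and $m\in[k,dk]$. On that range $\psi$ is uniformly bounded by its absolutely convergent Dirichlet series, giving $|T_k(s)|\leq C\,\delta^{k}$ on $K$; combined with the standard polynomial bound $|\binom{-s}{k}|\leq C'_K\,k^{R_K}$ for $s$ in $K$, the tail $\sum_{k\geq k_0}$ is majorized by a convergent series, while $\sum_{k<k_0}$ is a finite sum of entire functions. The initial interchange of the double sum defining $g$ I would justify by verifying absolute convergence on some right half-plane $\mathrm{Re}(s)\gg 0$ and then extending the resulting identity by analytic continuation. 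The argument for $f$ is identical, with $\phi$ replacing $\psi$.

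The main obstacle I foresee is the balancing act in the final estimate: the geometric saving $\delta^{k}$ has to beat the polynomial-in-$k$ blow-up of $|\binom{-s}{k}|$ on the compact set, and this works \emph{only} thanks to the strict inequality $\max|c_i|<1/d$. A secondary, purely bookkeeping difficulty is ensuring that $k_0$ and every constant depend only on $K$; this is routine but requires tracking the dependence of the polynomial bound on $R_K:=\sup_{s\in K}|s|$ and noting that $\psi$ (equivalently $\phi$, via the identity $\psi(s)(q^{s}-1)=\phi(s)(\zeta q^{s}-1)$) is bounded in any half-plane $\mathrm{Re}(s)\geq 1+\varepsilon$ by its defining Dirichlet series.
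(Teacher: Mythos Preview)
Your proposal is correct and follows essentially the same route as the paper: factor $P(n)=a_d n^{d}\bigl(1-\tilde P(1/n)\bigr)$, expand $(1-\tilde P(1/n))^{-s}$ by the generalized binomial series, expand $\tilde P(1/n)^{k}$ into monomials $n^{-l}$ with $k\le l\le dk$, and recognize the inner sums as shifts $\phi(ds+l)$ (resp.\ $\psi(ds+l)$). Your treatment of the convergence step is in fact more careful than the paper's: where the paper simply asserts that the series converges ``because $\phi(s)$ is bounded for large $|s|$'', you correctly localize to a compact set, split off a finite head, and use the geometric decay $\delta^{k}$ against the polynomial growth of $\bigl|\binom{-s}{k}\bigr|$ to control the tail.
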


\begin{proof}

We firstly remark that the hypothesis of $\max{|c_i|} < \frac{1}{d}$ implies the fact $|\tilde{P}(\frac{1}{n+1})|<1$ for any $n \in \mathbf{N}_{+}$. Indeed, $|\tilde{P}(\frac{1}{n+1})| = |\sum_{i=0}^{d-1} -\frac{a_i}{a_d}(n+1)^{i-d}| \leq \sum_{i=0}^{d-1} |\frac{a_i}{a_d}|<1$.
\begin{equation}
          \begin{aligned}
          f(s)&=\sum_{n=0}^{\infty} \frac{(\zeta)^{S_q(n)}}{(P(n+1))^s}\\
		&=a_d^{-s}\sum_{n=0}^{\infty} \frac{(\zeta)^{S_q(n)}}{(n+1)^{ds}(1-\tilde{P}(\frac{1}{n+1}))^s}\\
		&=a_d^{-s}\sum_{n=0}^{\infty} \frac{(\zeta)^{S_q(n)}}{(n+1)^{ds}}\sum_{k=0}^{\infty}\binom{s+k-1}{k}\tilde{P}(\frac{1}{n+1}))^k\\
		&=a_d^{-s}\sum_{n=0}^{\infty} \frac{(\zeta)^{S_q(n)}}{(n+1)^{ds}}\sum_{k=0}^{\infty}\binom{s+k-1}{k}\sum_{l=k}^{dk}m_{k,l}(n+1)^{-l}\\
	        &=a_d^{-s}\sum_{k=0}^{\infty}\binom{s+k-1}{k}\sum_{l=k}^{dk}m_{k,l}\sum_{n=0}^{\infty} \frac{(\zeta)^{S_q(n)}}{(n+1)^{ds+l}}\\
           &=a_d^{-s}\sum_{k=0}^{\infty}\binom{s+k-1}{k}\sum_{l=k}^{dk}m_{k,l}\phi(ds+l)\\
	&=a_d^{-s}\phi(ds)+a_d^s\sum_{k=1}^{\infty}\binom{s+k-1}{k}\sum_{l=k}^{dk}m_{k,l}\phi(ds+l),
          \end{aligned}
\end{equation}
where $m_{k,l}=\sum_{M_{k,l} \in \mathcal{P}(\left\{c_i| 1\leq i\leq n-1\right\})}\prod_{c_i \in M_{k,l}}c_i$ and $M_{k,l}$ are sets of $k$ elements included in $\left\{c_i| 1\leq i\leq n-1\right\}$ and the sum of indices of its elements equals $l$. The hypothesis $m=\max{|ci|} < \frac{1}{d}$ shows that $|\sum_{l=k}^{dk-k}m_{k,l}| \leq (md)^k<1$, so the right-hand side of (2.10) converges uniformly over the half plane $\left\{s|\Re(s) > 0\right\}$ because $\phi(s)$ is bounded for large $|s|$. By the same argument as in Theorem 2.2, we prove successively the holomorphic continuation of $f$ on the whole complex plane. 
\end{proof}
It is easy to check $f(0)=0$, and because of the uniform convergence of the right-hand side of (2.10) over the half-plane $\left\{s|\Re(s) > 0\right\}$, dividing  by $s$ and  letting $s$ tend  to $0$ we have:

\begin{equation}
          \begin{aligned}
          f'(0)&= d\phi'(0)+a_d^{-s}\sum_{k=1}^{\infty}\lim_{s\to 0}\frac{1}{s}\binom{s+k-1}{k}\sum_{l=k}^{dk}m_{k,l}\phi(ds+l)\\
		&=-d\log q/(\zeta -1)+\sum_{k=1}^{\infty}k^{-1}\sum_{l=k}^{dk}m_{k,l}\phi(l)\\
		&=-d\log q/(\zeta -1)+\sum_{k=1}^{\infty}k^{-1}\sum_{l=k}^{dk}m_{k,l}\sum_{n=0}^{\infty} \frac{(\zeta)^{S_q(n)}}{(n+1)^l}\\
		&=-d\log q/(\zeta -1)+\sum_{n=0}^{\infty}(\zeta)^{S_q(n)}\sum_{k=1}^{\infty}k^{-1}\sum_{l=k}^{dk}m_{k,l} \frac{1}{(n+1)^l}\\
		&=-d\log q/(\zeta -1)+\sum_{n=0}^{\infty}(\zeta)^{S_q(n)}\sum_{k=1}^{\infty}k^{-1}\tilde{P}^k(\frac{1}{n+1})\\
	        &=-d\log q/(\zeta -1)+\sum_{n=0}^{\infty}(\zeta)^{S_q(n)}\log(1-\tilde{P}(\frac{1}{n+1})),
          \end{aligned}
\end{equation}on the other hand, one has for all $s$, $\psi(s)(q^s-1)=\phi(s)(\zeta q^s-1).$

\begin{equation}
          \begin{aligned}
          f'(0)&=d\phi'(0)+a_d^{-s}\sum_{k=1}^{\infty}\lim_{s\to 0}\frac{1}{s}\binom{s+k-1}{k}\sum_{l=k}^{dk}m_{k,l}\phi(ds+l)\\
		&=-d\log q/(\zeta -1)+\sum_{k=1}^{\infty}k^{-1}\sum_{l=k}^{dk}m_{k,l}\psi(l)(q^l-1)/(\zeta q^l-1)\\
		&=-d\log q/(\zeta -1)+\zeta^{-1}\sum_{k=1}^{\infty}k^{-1}\sum_{l=k}^{dk}m_{k,l}\psi(l)+(\zeta^{-1}-1)\sum_{k=1}^{\infty}k^{-1}\sum_{l=k}^{dk}m_{k,l}\psi(l)/(\zeta q^l-1).
          \end{aligned}
\end{equation}
By the same method as above, we can deduce by calculating $g'(0)$ $$\sum_{k=1}^{\infty}k^{-1}\sum_{l=k}^{dk}m_{k,l}\psi(l)=\sum_{n=1}^{\infty}(\zeta)^{S_q(n)}\log(1-\tilde{P}(\frac{1}{n}))$$
and

\begin{equation}
          \begin{aligned}
          \sum_{k=1}^{\infty}k^{-1}\sum_{l=k}^{dk}m_{k,l}\psi(l)/(\zeta q^l-1)&= \sum_{k=1}^{\infty}k^{-1}\sum_{l=k}^{dk}m_{k,l}\sum_{n=1}^{\infty}\frac{(\zeta)^{S_q(n)}}{(n)^l}\sum_{r=1}^{\infty}(\zeta q^l)^{-r}\\
		&=\sum_{r=1}^{\infty}(\zeta)^{-r}\sum_{n=1}^{\infty}(\zeta)^{S_q(n)}\sum_{k=1}^{\infty}k^{-1}\sum_{l=k}^{dk}m_{k,l}\frac{1}{(nq^r)^l}\\
		&=\sum_{r=1}^{\infty}(\zeta)^{-r}\sum_{n=1}^{\infty}(\zeta)^{S_q(n)}\log(1-\tilde{P}(\frac{1}{nq^r}))\nonumber.
          \end{aligned}
\end{equation}
As a consequence, 

\begin{equation}\sum_{n=0}^{\infty}(\zeta)^{S_q(n)}\log(1-\tilde{P}(\frac{1}{n+1}))=\zeta^{-1}\sum_{n=1}^{\infty}(\zeta)^{S_q(n)}\log(1-\tilde{P}(\frac{1}{n}))+(\zeta^{-1}-1)\sum_{r=1}^{\infty}\sum_{n=1}^{\infty}(\zeta)^{S_q(n)-r}\log(1-\tilde{P}(\frac{1}{nq^r})).\end{equation}

\begin{proposition}
 We have the equality
$$\prod_{n=0}^{\infty}(1-\tilde{P}(\frac{1}{n+1}))^{\zeta^{Sq_n}}\times\prod_{n=1}^{\infty}(1-\tilde{P}(\frac{1}{n}))^{-\zeta^{Sq_n-1}}\times(\prod_{r=1}^{\infty}\prod_{n=1}^{\infty}(1-\tilde{P}(\frac{1}{nq^r}))^{\zeta^{Sq_n}-r})^{1-\zeta^{-1}}=1$$
\end{proposition}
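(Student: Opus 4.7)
The plan is to derive the identity by exponentiating the additive relation (14), which has just been established in the text. Equation (14) arose from equating the two expressions for $f'(0)$ computed in (10) and (11): the common term $-d\log q/(\zeta-1)$ cancels, and identities (12) and (13) rewrite the series involving $\psi$ as the sums over $\log(1-\tilde{P}(\cdot))$ that appear on both sides of (14). Proposition 2 is then simply the multiplicative form of this identity, obtained by applying $\exp$.

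Before exponentiating I would verify the convergence prerequisites so the operation is legitimate. The hypothesis $\max|c_i|<1/d$ used in Proposition 1 gives $|\tilde{P}(1/(n+1))|<1$, and the analogous bound holds for $\tilde{P}(1/n)$ and $\tilde{P}(1/(nq^r))$ for all $n,r\geq 1$, because these arguments are even smaller. Hence each factor $1-\tilde{P}(\cdot)$ lies in the open disk of radius $1$ about $1$, where the principal branch of $\log$ is holomorphic and inverts $\exp$. Combined with $|\log(1-w)|\leq 2|w|$ for $|w|\leq 1/2$ and the fact that $\tilde{P}(1/n)=O(1/n)$ (since $\tilde{P}$ has no constant term), this gives absolute convergence of every series of logarithms in (14); consequently the associated infinite products converge absolutely, termwise exponentiation is justified, and no rearrangement issue arises in the triple-indexed factor.

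Next I would exponentiate (14) directly. The left side becomes $\prod_{n=0}^{\infty}(1-\tilde{P}(1/(n+1)))^{\zeta^{S_q(n)}}$. On the right, the scalar $\zeta^{-1}$ pulls into the exponent to produce $\prod_{n=1}^{\infty}(1-\tilde{P}(1/n))^{\zeta^{S_q(n)-1}}$, and the double series exponentiates to $\bigl(\prod_{r=1}^{\infty}\prod_{n=1}^{\infty}(1-\tilde{P}(1/(nq^r)))^{\zeta^{S_q(n)-r}}\bigr)^{\zeta^{-1}-1}$. Moving the right-hand side to the left inverts the middle factor into the exponent $-\zeta^{S_q(n)-1}$ and converts $\zeta^{-1}-1$ outside the triple product into $1-\zeta^{-1}$, yielding exactly the stated identity. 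The argument is essentially bookkeeping; the only real step is the convergence check above, without which the passage from logarithmic sums to infinite products would be meaningless. I do not anticipate any deeper obstacle, since all the analytic work has already been carried out in Proposition 1 and in deriving (14).
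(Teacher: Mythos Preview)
Your proposal is correct and follows exactly the paper's approach: Proposition~2 is stated immediately after equation~(14) with no separate proof, the identity being nothing more than the exponentiated form of~(14). You supply the absolute-convergence justification that the paper omits; one minor slip is that $1/n$ is not ``even smaller'' than $1/(n+1)$, but the bound $|\tilde P(1/n)|\le\sum_i|c_i|<1$ still holds directly for all $n\ge1$, so the argument goes through.
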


Moreover, if we suppose, for any $j$
$$
x_j(m) =
\begin{cases}
\frac{r-1}{r} & \text{if $s_q(m)=j$ (mod r)}\\
-\frac{1}{r} & \text {if $s_q(m)\neq j$ (mod r)}\\
\end{cases}
$$
We clearly have $$\sum_{j \mod r} x_j(m)=0.(*)$$
Furthermore, $$\sum_{j \mod r} x_j(m)\zeta^j=\zeta^{S_q(m)}.(**)$$
Using $(*)$, Formula (2.13) can be reformulated as
\begin{equation}
          \begin{aligned}
&\sum_{j\mod r}\zeta^j\sum_{n=0}^{\infty}x_j(n)(\log(1-\tilde{P}(\frac{1}{n+1})+\sum_{r=1}^{\infty}\zeta^{-r}\log(1-\tilde{P}(\frac{1}{nq^r})))=\\
&\sum_{j\mod r}\zeta^{j-1}\sum_{n=1}^{\infty}x_j(n)(\log(1-\tilde{P}(\frac{1}{n})+\sum_{r=1}^{\infty}\zeta^{-r}\log(1-\tilde{P}(\frac{1}{nq^r}))).
          \end{aligned}
\end{equation}
Let now $\eta$ be a primitive root of unity, we can apply relation (2.14) successively to $\zeta=\eta^a$ for $a=1,2,...,r-1$. \\
Because of $(**)$, we also have 

\begin{equation}
          \begin{aligned}
&\sum_{j\mod r}\sum_{n=0}^{\infty}x_j(n)(\log(1-\tilde{P}(\frac{1}{n+1}))+\sum_{r=1}^{\infty}\zeta^{-r}\log(1-\tilde{P}(\frac{1}{nq^r})))=\\
&\sum_{j\mod r}\sum_{n=1}^{\infty}x_j(n)(\log(1-\tilde{P}(\frac{1}{n}))
+\sum_{r=1}^{\infty}\zeta^{-r}\log(1-\tilde{P}(\frac{1}{nq^r})))=0.
          \end{aligned}
\end{equation}
Define the matrices: $Mat_1$ to be $Mat_1=(\eta^{ij})$ and $Mat_2$ to be $Mat_2=(\eta^{ij-i})$, $i=0,1,...,r-1; j=0,1,...,r-1$, define $\lambda$ and $\beta$ by
$$\lambda(j)=\sum_{n=0}^{\infty}x_j(n)(\log(1-\tilde{P}(\frac{1}{n+1}))+\sum_{r=1}^{\infty}\zeta^{-r}\log(1-\tilde{P}(\frac{1}{nq^r})))$$
$$\beta(j)=\sum_{n=1}^{\infty}x_j(n)(\log(1-\tilde{P}(\frac{1}{n}))+\sum_{r=1}^{\infty}\zeta^{-r}\log(1-\tilde{P}(\frac{1}{nq^r}))).$$
Let 

\[ A=\left( \begin{array}{c}
\lambda(0)\\
\lambda(1)\\
...\\
\lambda(r-1) \end{array} \right)\] 

\[ B=\left( \begin{array}{c}
\beta(0) \\
\beta(1) \\
...\\
\beta(r-1) \end{array}\right).\] 

Then we have $$Mat_1A=Mat_2B.$$

On the other hand, $A$ is invertible and $Mat_1=Mat_2\times Mat_3$ with

\[ Mat_3= \left( \begin{array}{ccccc}
0 & 0 & ...& 0 & 1 \\
1 & 0 & ...& 0 & 0\\
0 & 1 & ... & 0 & 0\\
0 & 0 & ... & 1 & 0 \end{array} \right).\]

So we have $$A=Mat_3 \times B.$$

\begin{proposition}
 We have the equality $\lambda(i)=\beta(i-1)$ for $i=1,2,...,r-1$ and  $\lambda(0)=\beta(r-1)$, which leads to,  for $i=1,2,...,r-1$,
$$\prod_{n=0}^{\infty}((1-\tilde{P}(\frac{1}{n+1})\times\prod_{r=1}^{\infty}(1-\tilde{P}(\frac{1}{nq^r})))^{x_j(n)}=\prod_{n=1}^{\infty}((1-\tilde{P}(\frac{1}{n})\times\prod_{r=1}^{\infty}(1-\tilde{P}(\frac{1}{nq^r})))^{x_{j-1}(n)},$$
and for $i=0$,
$$\prod_{n=0}^{\infty}((1-\tilde{P}(\frac{1}{n+1})\times\prod_{r=1}^{\infty}(1-\tilde{P}(\frac{1}{nq^r})))^{x_0(n)}=\prod_{n=1}^{\infty}((1-\tilde{P}(\frac{1}{n})\times\prod_{r=1}^{\infty}(1-\tilde{P}(\frac{1}{nq^r})))^{x_{r-1}(n)}.$$
\end{proposition}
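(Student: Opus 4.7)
The plan is to read off the matrix equation (21), $A=Mat_3\times B$, component by component and then exponentiate. Since $Mat_3$ is the explicit cyclic permutation matrix displayed just above (21), its $i$-th row contains a single $1$ at column $i-1$ when $1 \leq i \leq r-1$, and at column $r-1$ when $i=0$. Consequently, the $i$-th coordinate of $Mat_3 \times B$ is $\beta(i-1)$ in the first case and $\beta(r-1)$ in the second. Matching these with the $i$-th coordinate $\lambda(i)$ of $A$ immediately yields the scalar identities claimed in the first sentence of the proposition.

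To obtain the infinite-product identities, I would exponentiate both sides of each scalar identity. The key observation is that each of $\lambda(j)$ and $\beta(j)$ is a combination of $\log(1-\tilde P(1/N))$-type terms; under the hypothesis $\max |c_i|<1/d$ inherited from Proposition 2, one has $|\tilde P(1/N)|<1$ for every $N\geq 1$, so all the logarithms are well defined on the principal branch and obey $\log(1-\tilde P(1/N))=O(1/N)$. Together with $|x_j(n)|\leq 1$, this yields absolute convergence of the single series defining $\lambda$ and $\beta$, as well as of the nested double series containing the $\zeta^{-r}$ weights (the factor $|\zeta^{-r}|=1$ being dominated by the geometric decay of $\tilde P(1/(nq^r))$ in $r$). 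Absolute convergence permits term-wise exponentiation, transforming each sum of logarithms into the advertised infinite product.

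The main bookkeeping task, which I expect to be the only genuinely delicate step, is matching the two summands that make up $\lambda(i)$, namely the $(1-\tilde P(1/(n+1)))$-part and the $\prod_r(1-\tilde P(1/(nq^r)))$-part, with the corresponding summands of $\beta(i-1)$. Because $\lambda(i)=\beta(i-1)$ holds as a single equality between convergent real sums, exponentiation produces one product identity whose integrand bundles both factors inside the same exponent $x_j(n)$ (respectively $x_{j-1}(n)$); that is precisely the statement displayed in the proposition. No ingredient beyond (21), the convergence estimate from Proposition 2, and Fubini for absolutely convergent double series is needed.
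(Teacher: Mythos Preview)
Your proposal is correct and matches the paper's own argument: the paper does not write out a separate proof environment for this proposition but treats it as an immediate consequence of the relation $A=Mat_3\times B$ in (21), and your reading of $Mat_3$ row by row together with exponentiation of the defining sums for $\lambda(j)$ and $\beta(j)$ is exactly the intended derivation. The convergence justification you sketch (via $|\tilde P(1/N)|<1$ from the hypothesis of Proposition~3) is the natural one and is implicit in the paper's earlier computations (10)--(13).
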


\section{Acknowledgement}
We would like to thank D. Essouabri for his helpful remarks and discussions on the meromorphic continuation of general Dirichlet series.

\bibliographystyle{alpha}
\bibliography{citations}

\end{document}